\newtheorem{lem}{Lemma}
\newtheorem{thm}{Theorem}
\newcommand{\CP}{\mathbb{CP}^1}
\newcommand{\tp}{two-point correlation }
\newcommand{\kahler}{K\"ahler }
\renewcommand{\d}{\partial}
\newcommand{\dbar}{\bar\partial}
\newcommand{\hn}{H^0(M,L^n)}
\newcommand{\e}{\mathbf{E}}
\newcommand{\C}{\mathbb{C}}
\begin{document}
\title[Two-point correlation]{Correlations between zeros and critical points of  random analytic functions }
\author[Renjie Feng]{Renjie Feng}
\address{Beijing International Center for Mathematical Research, Peking University, Beijing, China}
\email{renjie@math.pku.edu.cn}

\date{\today}
   \maketitle
   \begin{abstract}

 We study  the two-point correlation $K^m_n(z,w)$ between zeros and critical points of Gaussian random holomorphic sections $s_n$  over \kahler manifolds.  The critical points are points $\nabla_{h^n} s_n=0$ where $\nabla_{h^n}$ is the smooth Chern connection with respect to the Hermitian metric $h^n$ on line bundle $L^n$.  The main result is that  the rescaling limit of $K^m_n(z_0+\frac u{\sqrt n}, z_0+\frac v{\sqrt n})$ for any $z_0\in M$ is universal   as $n$ tends to infinity. In fact, the universal rescaling limit is the \tp  between zeros and critical points of Gaussian analytic functions for the Bargmann-Fock space of level $1$. Furthermore,  there is a 'repulsion' between zeros and critical points for the short range; and a 'neutrality' for the long range.   
   \end{abstract}

\section{Introduction}


In this article, we study the two-point correlation between critical points and zeros of random analytic functions and its generalization to   random holomorphic sections on \kahler manifolds. The famous Gauss-Lucas Theorem states that the holomorphic critical points of any polynomial of complex one variable are contained in the convex hull of its zeros.  This implies that some non-trivial correlations between zeros and critical points of
 random polynomials must exist.  It seems that the analogous properties should exist for random holomorphic sections on \kahler manifolds.  In \cite{F},  the author studied two conditional expectations on Riemann surfaces: the expected density of zeros of Gaussian random sections with a conditioning critical point and the expected density of critical points with a fixed zero.  It's proved that  both conditional densities have universal rescaling limits but the short  range behaviors are quite different:  there is a 'neutrality' between critical points and the conditioning zero while there is a 'repulsion' between zeros and the conditioning critical point.  In this paper, we  further study the \tp between zeros and critical points of Gaussian random holomorphic sections and its rescaling limit. The essential   difference to the Gauss-Lucas setting   is that the critical points are defined as zeros of the derivative of the smooth Chern connection $\nabla_{h^n}$ with respect to the Hermitian metric $h^n$ on the line bundle $L^n$ instead of
the holomorphic derivative $\frac{\partial}{\partial z}$ (a meromorphic connection). Hence, the two-point correlation should depend on the geometry, i.e., metrics defined on line bundles and \kahler manifolds. But we will show that
the rescaling limit of the \tp  is universal. In fact, the universal rescaling limit is the \tp between zeros and critical points of Gaussian analytic functions for the Bargmann-Fock space of level $1$. Such universal rescaling limit phenomenon was first proved by Bleher-Shiffman-Zelditch in \cite{BSZ1} for the \tp between zeros of Gaussian random holomorphic sections.  In this article, we will generalize Bleher-Shiffman-Zelditch's method to derive the universal rescaling limit of the \tp between zeros and critical points on \kahler manifolds.  We will show that the rescaling two-point correlation will tend to $0$ for the short  range and tend to a positive constant (which only depends on the dimension) for the long range. Roughly speaking,    there is a 'repulsion' between zeros and critical points for the short range and a 'neutrality' for the long range which means that zeros and critical points behave independently if they are far apart.

%
\subsection{Main results }
To state our results, we need to recall some basic  definitions of Gaussian random holomorphic sections of a line bundle (see \S \ref{bg}).
 We let $(L,h)\to (M,\omega)$ be a positive Hermitian holomorphic line bundle
over a compact \kahler manifold of complex $m$-dimensional. 
We denote
$H^0(M,L^n)$ as the space of global holomorphic sections of the $n$-th tensor power of $L$.
 The Hermitian metric $h$ will induce an inner product on $H^0(M, L^n)$ \eqref{innera}
and thus induces a Gaussian measure $d\gamma_{d_n}$ on $H^0(M, L^n)$, where $d_n$ is the dimension of  $H^0(M, L^n)$. A special case is when $M=\CP\cong S^2$ and $L=\mathcal O(1)$ the hyperplane line bundle, $H^0(\CP, \mathcal O(n))$ is the space of homogeneous polynomials of degree $n$. There is a classical Fubini-Study metric defined on the line bundle $(\mathcal O(1), h_{FS})\to (\CP, \omega_{FS})$ which will induce an inner product on $H^0(\CP, \mathcal O(n))$. Hence, it will induce a Gaussian measure on the space of 
homogeneous polynomials of degree $n$; the corresponding random polynomials are called Gaussian $SU(2)$ polynomials which are invariant under the rotation  on $S^2$, or equivalently, the $SU(2)$ action on $\CP$. 

Throughout the article, we assume our line bundle is polarized, i.e., $h=e^{-\phi}$, where $\phi$ is the smooth local \kahler potential such that $\omega=\d\dbar \phi$. 
Given a global holomorphic section $s_n$, we write $s_n=f_ne^{\otimes n}$ in a local coordinate patch where $f_n$ is a holomorphic function,  the Chern connection of $s_n$ is given by \cite{GH}
 \begin{equation}\label{dddddk}\nabla_{h^n}s_n=\sum_{i=1}^m\left(\frac{\partial f_n}{\partial z_i}-n\frac{\partial \phi}{\partial z_i} f_n\right) e^{\otimes n}\otimes dz_i. \end{equation}
 The critical points of holomorphic sections are points where $\nabla_{h^n}s_n=0$. Note that $\nabla_{h^n}s_n=0$ is  only a smooth equation instead a holomorphic equation since $\phi$ is smooth; furthermore, the solution depends on the geometry. This is the essential difference in our study between complex manifolds and the complex plane.  For example, let $M$ be a compact Riemann surface,  the total number of zeros of non-zero holomorphic sections of the positive holomorphic line bundle $L^n\to M$ is $c_1(L)n$ which is topologically invariant \cite{GH}, but the total number of critical points is not topological.   Deterministically, given a holomorphic section, one can
not tell how many critical points it has. But on average, it's proved in \cite{DSZ1, DSZ2} that the expected number of critical points has the asymptotics,  
$$\mathbf E(\# \mbox{of critical points defined by Chern connection})$$$$=\frac{5}{3}c_1(L)n+\frac{7}{9}(2g-2)+(\mbox{non-topological term})n^{-1}+\cdots,$$ 
where $g$ is the genus of the Riemann surface and the non-topological terms depend on the global geometry where the curvatures of the \kahler metric are involved. 
 Take Gaussian $SU(2)$ polynomials $p_n (z)$ for example, the holomorphic derivative $\frac{\partial p _n}{\partial z}=0$ always gives $n-1$
critical points; but the average number of  critical points defined by the smooth Chern derivative is asymptotic to $\frac{5}{3}n$ (recall $c_1(\mathcal O(1))=1$).


 We define the two-point correlation between zeros and critical point of Gaussian random sections with respect to $(H^0(M, L^n),d\gamma_{d_n})$ as \begin{equation}\label{firstdef} K^m_n(z,w) := \e_{(H^0(M, L^n),d\gamma_{d_n})}\left(\sum_{z: s_n(z)=0 }  \delta_z\otimes \sum_{w: \nabla_{h^n}s_n(w)=0 }\delta_w \right).\end{equation} 
 Note that given a  non-zero global holomorphic section, the zero set is an algebraic variety of codimension $1$ and the set of critical points is codimension $m$. 
Thus, $K^m_n(z,w)$ is a $(m+1,m+1)$-current on $M\times M$ in the sense of distribution, \begin{equation}\label{firstdef}\begin{split}&\int_{M\times M}\psi(z)\varphi(w)  K^m_n(z,w) \\ &=\e_{(H^0(M, L^n),d\gamma_{d_n})} \left(\int_{\{z: s_n(z)=0\} } \psi(z) \sum_{w: \nabla_{h^n}s_n(w)=0 } \varphi(w)\right)\end{split}\end{equation}
where $\psi$ is any smooth $(m-1,m-1)$-current on $M$ and $\varphi$ is a  smooth test function.




The purpose of the article is to study the typical spacing between zeros and critical points. We rescale the global expression of $K^m_n(z, w)$ by a factor $ {\sqrt n}$ at any fixed point $z_0\in M$, i.e., we enlarge the local geodesic ball by a factor $\sqrt n$. Note that $K^m_n(z, w)$ is 
a $(m+1,m+1)$-current depending on the Hermitian metric $h$ on the line bundle, but our main result claims that its rescaling limit is universal, 
\begin{thm}\label{main}
The rescaling  of the $(m+1,m+1)$-current of the \tp of zeros and critical points of Gaussian random sections with respect to $(\hn, d\gamma_n)$ has the following pointwise universal limit,
\begin{equation}\lim_{n\to \infty }K^m_n(z_0+\frac u{\sqrt n},z_0+\frac v{\sqrt n} )=K_{BF}^m(u,v)\frac {d\ell_u}\pi \wedge \frac{(d\ell_v)^m }{\pi^m m!},\end{equation}
where we denote $d\ell_z:= \frac i 2\sum_{j=1}^m dz_j\wedge d\bar{z_j}$ such that  $\frac{(d\ell_z)^m }{ m!}$ is the Lebesgue measure on $\mathbb C^m$. 
In fact,  $K_{BF}^m(u,v)$ is the two-point correlation function between zeros and critical points of Gaussian analytic functions of the Bargmann-Fock space of level $1$;  the explicit expression of $K_{BF}^m(u,v)$  is given by \eqref{higherdim}. Furthermore,   $K_{BF}^m(u,v)$ is a function of $|u-v|$ and it  admits the following pointwise limits, 
 \begin{equation}\label{timesss}\lim_{|u-v|\to 0}K_{BF}^m(u,v)=0\,\,\,\mbox{and}\,\,\, \lim_{|u-v|\to \infty}K_{BF}^m(u,v)=c_m.\end{equation}
 where $c_m$ is a constant only depending on the dimension, in particular, $c_1=\frac 53$.
 
\end{thm}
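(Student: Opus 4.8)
\emph{Proof strategy.} The plan is to combine the potential--theoretic method that Bleher--Shiffman--Zelditch \cite{BSZ1} used for correlations of zeros with a Kac--Rice formula accommodating the critical--point current, and then to feed in the near--diagonal scaling asymptotics of the Bergman (equivalently Szeg\H{o}) kernel. Write $C_n(w):=\sum_{\nabla_{h^n}s_n(w')=0}\delta_{w'}$ for the random critical--point current and $\Pi_n$ for the Bergman kernel of $(\hn,d\gamma_n)$. Applying the Poincar\'e--Lelong formula to $s_n$ for each fixed realization and taking expectations gives
\begin{equation*}
K^m_n(z,w)=\frac{i}{2\pi}\,\partial\dbar_z\,Q_n(z,w)+\frac{n}{2\pi}\,\omega_z\otimes\e\!\left[C_n(w)\right],\qquad Q_n(z,w):=\e\!\left[\log\|s_n(z)\|^2_{h^n}\otimes C_n(w)\right],
\end{equation*}
and the mixed pluripotential $Q_n$ is evaluated by a Kac--Rice formula for $C_n$: conditioning on the Gaussian vector $\nabla_{h^n}s_n(w)\in\C^m$ being zero,
\begin{equation*}
Q_n(z,w)=\e\!\left[\log\|s_n(z)\|^2_{h^n}\,\bigl|\det\nabla^2_{h^n}s_n(w)\bigr|\ \Big|\ \nabla_{h^n}s_n(w)=0\right]\cdot D_n(w),
\end{equation*}
where $D_n(w)$ is the Gaussian density of $\nabla_{h^n}s_n(w)$ at the origin and $\det\nabla^2_{h^n}s_n(w)$ is the real Jacobian of the smooth map $\nabla_{h^n}s_n$ at a critical point (the critical set is a.s.\ nondegenerate, so this counts it with multiplicity one). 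Each Gaussian input here --- the covariances of $s_n(z)$, $\nabla_{h^n}s_n(w)$, $\nabla^2_{h^n}s_n(w)$ and the cross--covariances between $z$ and $w$ --- is a covariant derivative of $\Pi_n$ at $(z,z)$, $(w,w)$ and $(z,w)$, and for $n$ large the relevant covariance matrix is nondegenerate, so the formula is meaningful.

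Fix $z_0$, pass to Heisenberg normal coordinates and a preferred local frame at $z_0$, and normalize $s_n$ pointwise, dividing $s_n(z)$ by $\Pi_n(z,z)^{1/2}$; then $Q_n$, $D_n$, the conditional covariance and the conditional expectation become fixed Gaussian--algebraic functionals of the normalized kernel $P_n(z,w):=\Pi_n(z,w)/\sqrt{\Pi_n(z,z)\Pi_n(w,w)}$, finitely many of its holomorphic and anti--holomorphic derivatives, and the \kahler potential $\phi$ (the latter entering only through the Chern connection $\nabla_{h^n}=\partial-n\,\partial\phi$, which is the source of the a priori geometry dependence). Substituting $z=z_0+\frac{u}{\sqrt n}$, $w=z_0+\frac{v}{\sqrt n}$ and invoking the $C^\infty$ near--diagonal asymptotics of the Szeg\H{o}/Bergman kernel from \cite{BSZ1},
\begin{equation*}
P_n\!\left(z_0+\tfrac{u}{\sqrt n},\,z_0+\tfrac{v}{\sqrt n}\right)\ \longrightarrow\ e^{\,u\cdot\bar v-\frac12|u|^2-\frac12|v|^2},
\end{equation*}
together with the matching convergence of all derivatives that enter the formulas and the fact that, modulo the frame change, the rescaled Chern connection $\nabla_{h^n}$ becomes the Bargmann--Fock connection $\nabla_{BF}=\partial-\bar u$, one sees that the rescaled covariance matrix, its inverse, the density $D_n$, and the conditional expectation converge to the corresponding objects for the level--$1$ Bargmann--Fock Gaussian analytic function $s_{BF}$. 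Since $|\det\nabla^2_{h^n}s_n|$ and $\log\|s_n\|^2$ are unbounded, one supplements this with uniform $L^p$ and uniform--integrability bounds (again from the Bergman expansion with controlled remainder) to pass the limit inside the expectation. Carrying the limit through the two displayed identities yields the stated limit, with $K^m_{BF}(u,v)$ the two--point density of zeros versus Chern critical points of $s_{BF}$; writing the conditional Gaussian integral out explicitly produces \eqref{higherdim}.

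The level--$1$ Bargmann--Fock ensemble is invariant under the Heisenberg translations (equivalently under $U(m)$), and both its zero divisor and its Chern critical set are equivariant, so $K^m_{BF}(u,v)$ depends only on $|u-v|$. As $|u-v|\to\infty$ the off--diagonal Bargmann--Fock kernel and all its derivatives decay like $e^{-|u-v|^2/2}$, so $s_{BF}(u)$ becomes asymptotically independent of $\bigl(\nabla_{BF}s_{BF}(v),\nabla^2_{BF}s_{BF}(v)\bigr)$; the two--point density then factorizes into the product of the zero intensity and the Chern critical--point intensity of $s_{BF}$, which after normalizing by $\tfrac{d\ell_u}{\pi}\wedge\tfrac{(d\ell_v)^m}{\pi^m m!}$ equals $1\cdot c_m$, where $c_m$ is the expected number of Chern critical points per unit Euclidean volume of $s_{BF}$ computed in \cite{DSZ1,DSZ2}; in particular $c_1=\frac53$. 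As $|u-v|\to 0$ one argues ``repulsion'': the joint conditioning $s_{BF}(u)=0$ and $\nabla_{BF}s_{BF}(v)=0$ with $u$ near $v$ forces, by Taylor expansion, $s_{BF}(v)$ and $\partial s_{BF}(v)$ to be of size $O(|u-v|^2)$. In complex dimension one this is transparent --- then zeros and critical points are points and Kac--Rice reads $K^1_n(z,w)=\e\bigl[|s_n'(z)|^2\,|\det\nabla^2_{h^n}s_n(w)|\ \big|\ s_n(z)=0,\ \nabla_{h^n}s_n(w)=0\bigr]\,p_n(0,0)$, and the above constraint drives $|s_n'(z)|^2$ to $0$ at the rate $|u-v|^2$ --- and the same degeneracy of the conditional law persists for general $m$, whence $K^m_{BF}(u,v)\to 0$.

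The delicate part of the argument concerns the Hessian and the unbounded Jacobian: unlike the pure--zeros case of \cite{BSZ1}, the integrand carries the unbounded factor $|\det\nabla^2_{h^n}s_n|$, and the Chern connection drags $\phi$ and its derivatives into every covariance entry. One must (i) establish uniform nondegeneracy of the full covariance matrix of $\bigl(s_n,\nabla_{h^n}s_n,\nabla^2_{h^n}s_n\bigr)$ at the two scaled points along $n\to\infty$, and (ii) obtain uniform moment bounds on $|\det\nabla^2_{h^n}s_n|$ and on $\log^{\pm}\|s_n\|$ to justify interchanging limit and expectation; both rest on the off--diagonal Bergman/Szeg\H{o} asymptotics with explicit remainder control. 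The short--range analysis is also subtle, since the limiting conditional covariance degenerates as $u\to v$ and one must extract the precise rate at which the relevant Jacobian vanishes rather than merely its vanishing.
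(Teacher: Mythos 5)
Your strategy is sound and would deliver the theorem, but it follows a genuinely different route from the paper's at the key structural step. The paper never invokes Poincar\'e--Lelong: it writes \emph{both} the zero current and the critical-point current as delta functions of the local data ($f_n$ and $g_n=\nabla'_{h^n}f_n$) with their Jacobian factors, obtaining a single ``double Kac--Rice'' formula (Lemmas \ref{joint1} and \ref{joint1222}) in which $K^m_n$ is one Gaussian integral of $\|\xi\|^2\,|\det(H_1^*H_1-H_2^*H_2)|$ against the joint density of $\bigl(s_n(z),\nabla'_{h^n}s_n(w),\nabla'_{h^n}s_n(z),\nabla'\nabla's_n(w),\nabla''\nabla's_n(w)\bigr)$; the universal limit is then read off from the universality of the rescaled Bergman kernel and its Chern derivatives, exactly as you propose. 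Your mixed decomposition $K^m_n=\frac{i}{2\pi}\ddbar_zQ_n+c\,n\,\omega_z\otimes\e[C_n]$ (Kac--Rice only in $w$, pluripotential theory in $z$) is the route of \cite{Hann2, Hann1} for the meromorphic case; it avoids conditioning on $s_n(z)=0$ but must control the $\log$-singularity inside the conditional expectation and differentiate $Q_n$ under the expectation, whereas the paper's route avoids the $\log$ at the price of verifying nondegeneracy of the full covariance matrix $\Delta$. (Minor convention check: with $\omega=\frac i2\ddbar\phi$ and $\|s_n\|^2=|f_n|^2e^{-n\phi}$ the Poincar\'e--Lelong term is $\frac n\pi\omega_z$, not $\frac n{2\pi}\omega_z$.) For the limits \eqref{timesss} the paper computes explicitly rather than arguing qualitatively: as $r\to0$ it shows $\Lambda=C-B^*A^{-1}B\to\mathrm{diag}(0,2,0)$, so the conditional law of $\xi_1=\nabla'_{h^n}s_n(z)$ degenerates to $\delta_0$ and the factor $|\xi_1|^2$ annihilates the integral --- the rigorous counterpart of your Taylor-expansion heuristic (note $\det A\to1$, so the joint density $p(0,0)$ stays bounded, a point your sketch should confirm); as $r\to\infty$ it evaluates $\frac1{\pi^3}\int e^{-\|\xi\|^2}|\xi_1|^2\bigl|2|\xi_2|^2-|\xi_1|^2\bigr|\,dV_\xi=\frac53$ directly instead of appealing to asymptotic independence and the Douglas--Shiffman--Zelditch critical-point density; both identifications of $c_m$ agree.
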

 To prove this, we will first derive a Kac-Rice type formula on \kahler manifolds. We will see that the \tp  can be expressed by the Bergman kernel and its derivatives up to order $4$.  It's well-known that the Bergman kernel on any \kahler manifold has a universal rescaling limit -- the Bergman kernel for the Bargmann-Fock space of level $1$. Hence,  the \tp   will admit a universal rescaling limit; the limit is actually the \tp  of Gaussian analytic functions of the Bargmann-Fock space of level $1$.

Theorem \ref{main} determines some local behaviors
between critical points and  zeros. 
 Intuitively, the rescaling limit of the \tp measures  the asymptotic probability of finding critical points and zeros in the small geodesic ball of radius of order $n^{-\frac{1}{2}}$.
 Roughly speaking, let's take the $1$-dimensional Riemann surfaces for example, the rescaling limit $K_{BF}^m(u,v)$ tending to $0$ as $|u-v|\to 0$ indicates that  it's unlikely to find
a zero and a critical point nearby simultaneously, i.e., there is a `repulsion` between zeros and critical points. The limit $K_{BF}^m(u,v)$ tending to $\frac 53$ as $|u-v|\to \infty$ indicates that zeros and critical points can not 'feel' each other for the long range, or equivalently, there is no correlation for the long range.  

A possible explanation for the 'repulsion' phenomenon is as follows. For the positive holomorphic line bundle,
it's well known that  the local minima of the  $h$-norm   $|s_n|_{h^n}$ are its zeros and the local maxima/saddle points of $|s_n|_{h^n}$ are obtained at the critical points  $\nabla_{h^n}s_n=0$ \cite{GH}.
Intuitively, at the zero of $|s_n|_{h^n}$,  $|s_n|_{h^n}$ is 'turning up' and it is very possible that it takes a while for $|s_n|_{h^n}$ to reach the local maxima/saddle, or equivalently, the process can not touch the local maxima/saddle immediately after it leaves $0$, which  implies that a 'repulsion' could occur between local minima and local maxima/saddle of $|s_n|_{h^n}$. This might explain that  a 'repulsion'  exists between zeros and and critical points of $s_n$. 

\subsection{Comparisons between Meromorphic and Chern connections}
As a  remark, the  two-point correlation between zeros and holomorphic critical points   has been studied recently in \cite{Hann2, Hann1, Hann3} for Gaussian $SU(2)$ polynomials on the complex plane $\mathbb C$. In fact, the Gaussian $SU(2)$ polynomials can be viewed as   meromorphic functions on $\CP$  and the holomorphic derivative  $\frac{\d}{\d z}$ can be viewed as a meromorphic connection on $\CP$ which has a pole at infinity.  In \cite{Hann2},  the two-point correlation function between zeros and the holomorphic critical points is derived by  the Poincar\'{e}-Lelong formula (but the author did not derive the rescaling limit). In  \cite{ Hann1, Hann3}, it is also proved that 
zeros and  critical points appear
in rigid pairs, to be more precise, given a zero, with high probability there is a unique critical point in the ball of radius of order $n^{-1}$ around the zero.  

The smooth Chern connection plays an important role in our results compared with   meromorphic connections.  As we show in this article,   the rescaling limit     $K_n(z+\frac u{\sqrt n},z+\frac v{\sqrt n})$ of the \tp between zeros and critical points (defined by the smooth Chern connection) is universal if we rescale the local domain by a factor $n^{-\frac 12}$, roughly speaking, this implies that the typical spacing between zeros and critical points is
$n^{-\frac 12}$.  Let  $K_n^{mero}(z ,w)$ be the   \tp between zeros  and critical points defined by a meromorphic connection  $\frac{\d s_n }{\d z}=0$ for Gaussian random holomorphic sections $s_n$. In  \cite{FSZ},  we show that   $K_n^{mero}(z+\frac u{\sqrt n},z+\frac v{\sqrt n})$ also admits a universal  limit.  The above two rescaling limits exist since the covariance kernel  of Gaussian random sections $s_n$ is the Bergman kernel (see \eqref{cov}) and the Bergman kernel has the universal rescaling limit $e^{z\bar w}$ (see \S  \ref{us}).  In fact, following the main idea in \cite{BSZ1}  and our proof  in  \S  \ref{us},  in order to derive the rescaling limit, it's enough to consider the Gaussian analytic function $$f(z)=\sum_{j=0}^\infty \frac {a_j}{\sqrt{j!}}z^j,$$
where $a_j$ are i.i.d. standard complex Gaussian random variables with mean $0$ and variance $1$.  Both the limits   $K_n(z+\frac u{\sqrt n},z+\frac v{\sqrt n})$   and $K_n^{mero}(z+\frac u{\sqrt n},z+\frac v{\sqrt n})$  are universal and obtained by the corresponding ones for $f(z)$. But the behaviors of these two rescaling limits of $K_n$ and $K_n^{mero}$ are quite different. First note that  the distribution of zeros of $f(z)$ is invariant under the translation and rotation of the complex plane \cite{HKPV}. Now recall \eqref{bfc}  of the smooth Chern connection under the Bargmann-Fock metric, let $$g(z):=\nabla_{h_{BF}} f(z)=\frac{\d f}{\d z}-\bar z f ,$$ then it's easy to see the distribution of zeros of $g(z)$ is also invariant under the rotation and translation (by computing the covariance kernel of $g(z)$). Hence,   the universal rescaling limit  $K_{BF}(u,v):=\lim_{n\to\infty} K_n(z+\frac u{\sqrt n},z+\frac v{\sqrt n})$ is actually a function in the form of $K(|u-v|)$, i.e., it's independent of the position of $z\in M$ and it's a   function depending only on the distance of $|u-v|$. But, let the meromorphic derivative $$g^{mero}(z):=\frac {\d f}{\d z},$$
then it's easy to show that the zero set of $g^{mero}$ is only rotation invariant but not translation invariant, and hence, the universal rescaling limit $K^{mero}_{BF}(u,v):=\lim_{n\to\infty} K^{mero}_n(z+\frac u{\sqrt n},z+\frac v{\sqrt n})$ should be  a function in the form of $K^{mero}(z, |u-v|)$, i.e.,  it's a   function depending  on the position $z$ and the distance of $|u-v|$.  


\bigskip

The article is organized as follows. In \S \ref{bg}, we will  first recall some basic concepts about the positive holomorphic line bundles and  \kahler manifolds, then we will define Gaussian random holomorphic sections. In \S \ref{thm122}, we will derive a Kac-Rice type formula for the \tp of zeros and critical points of Gaussian random holomorphic sections on any \kahler manifold. In \S \ref{us}, we will see that the \tp has a universal rescaling limit since the Bergman kernel does. Then we will derive the estimates \eqref{timesss}: we will prove such estimates for Riemann surfaces, then  sketch the proof for higher dimensions. 

\bigskip

\textbf{Acknowledgement} The author would like to thank Steve Zelditch for many helpful suggestions and corrections on the manuscript. 


\bigskip





 \section{Background}\label{bg}
 In this section, we will review some basic concepts and notations on Gaussian random holomorphic sections of
 positive holomorphic line bundles over \kahler manifolds.

\subsection{\kahler manifolds}
Let $(M,\omega)$ be a compact \kahler manifold of complex $m$-dimensional with the \kahler form \begin{equation}\omega=\frac{\sqrt{-1}}{2}\d\dbar\phi,\end{equation} where $\phi$ is the smooth local \kahler potential in a local coordinate patch $U\subset M$. Let $(L,h)\to (M,h)$ be a positive holomorphic line bundle such that the curvature of the Hermitian metric $h$
 \begin{equation}\Theta_h=-\frac{\sqrt{-1}}{2} \d\dbar \log h  \end{equation}
is a positive $(1,1)$ form \cite{GH}. Let $e$ be a local non-vanishing holomorphic section of $L$ over $U \subset M$ such that locally $L|_U\cong U\times\C$ and the pointwise $h$-norm of $e$ is $|e|_{h} = h(e, e)^{
1/2}$. Throughout the article, we  assume that the line bundle is polarized, i.e.,  \begin{equation}\Theta_h=\omega\,\,\, \mbox{or equivalently}\,\,\, |e|^2_h=h(e,e)=e^{-\phi}.\end{equation} Thus, $\frac \omega \pi$ is a de
Rham representative of the Chern class $c_1(L)$.  Let  \begin{equation}dV= \frac{\omega^m}{\pi^m m!}\end{equation} be the volume form. We assume that the total volume is $$\int_M dV=1.$$

We denote by $H^0(M,L^n)$ the space of global holomorphic sections of the $n$-th tensor power of $L$. Locally, we can write the global holomorphic section of $L^n$ as $s_n=f_ne^{\otimes n}$ where $f_n$ is some holomorphic function on $U$.
We denote the dimension of $H^0(M,L^n)$ by $d_n$. The Hermitian
metric $h$ induces a Hermitian metric $h
^n$ on $L^n$ as $|e^{ \otimes n}|_{h^n}=|e|_h^n$,  i.e., $|s_n|^2_{h^n}=|f_n|^2h^n(e^{\otimes n},e^{\otimes n})=|f_n|^2e^{-n\phi}$.

Now we can define an inner product on $H^0(M,L^n)$ as the following integration
\begin{equation}\label{innera}\langle s_{n,1}, s_{n,2}\rangle_{h^n}:=\int_M h^n( s_{n,1}, s_{n,2})dV=\int_M f_{n,1}\overline{f_{n,2}}e^{-n\phi}dV\end{equation}
for $s_{n,j}=f_{n,j}e^{\otimes n}\in H^0(M,L^n)$ with $j=1,2$.

The Chern connection $\nabla_{h^n}$ of the line bundle $(L^n, h^n)$ is the unique connection which is compatible with the Hermitian metrics $h^n$ and the holomorphic structure of complex manifolds \cite{GH}.  The smooth Chern connection  can be decomposed  into holomorphic and antiholomorphic parts as
\begin{equation}\label{de1}\nabla_{h^n}=\nabla'_{h^n}+\nabla''_{h^n},\end{equation} where in the local coordinate, they read \begin{equation}\label{de2}\nabla'_{h^n}=d_z+n\d\log h\,\,\,\mbox{and}\,\,\, \nabla''_{h^n}=d_{\bar z}.\end{equation} For the polarized line bundle with $h=e^{-\phi}$,  the Chern connection is 
\begin{equation}\label{dechern1}\nabla_{h^n}'s_n=\sum_{i=1}^m\left(\frac{\partial f_n}{\partial z_i}-n\frac{\partial \phi}{\partial z_i}f_n\right) e^{\otimes n}\otimes dz_i\,\,\,\mbox{and}\,\,\, \nabla''_{h^n}s_n=\sum_{i=1}^m \frac{\d f_n}{\d \bar z_i}e^{\otimes n}\otimes d\bar z_i\end{equation} 
for smooth sections $s_n=f_ne^{\otimes n}$  in the local coordinate. For the special case when $s_n$ is a global holomorphic section, we have \begin{equation}\nabla_{h^n}s_n=\nabla'_{h^n} s_n.\end{equation}

\subsection{\kahler normal coordinate}\label{kahlernormal}
Given a complex $m$-dimensional \kahler manifold $(L,h)\to(M,\omega)$, we freeze at a point $z_0$ as the origin of the coordinate patch and we can choose a \kahler normal coordinate $\{z_j\}$ as well as an adapted frame $e_L$ of the line bundle $L$ around $z_0$.  It is well-known that in terms of \kahler normal coordinates $\{z_j\}$,
the K\"ahler potential $\phi$ has the following expansion in the neighborhood of the origin $z_0$, \begin{equation}\label{kahler}\phi(z,\bar z)= \|z\|^2 -\frac 14 \sum R_{j\bar
kp\bar q}(z_0)z_j\bar z_{\bar k} z_p\bar z_{\bar q} +
O(\|z\|^5)\;.\end{equation}And thus,  \begin{equation}\label{adapted}\phi(z_0)=0,\,\, \d\phi(z_0)=0,\, \d^2\phi(z_0)=0,\,\,\d\dbar\phi(z_0)=1,\,\, \omega(z_0)=d\ell_z ,\end{equation}
where  $d\ell_z:= \frac i 2\sum_{j=1}^m dz_j\wedge d\bar{z_j}$.
In general, $\phi$ contains a
pluriharmonic term $f(z) +\overline{f(z)}$, but  a change of frame for $L$
eliminates that term up to fourth order. We refer 
to \S 3.1 in \cite{DSZ1} for more details.

An example on the \kahler normal coordinate and the adapted frame is the affine coordinate for the Fubini-Study metric of the hyperplane line bundle over the complex projective space $(\mathcal O(1), h_{FS})\to (\CP,\omega_{FS})$.  The \kahler form on $\CP$ is the Fubini-Study form.  In an affine coordinate, the \kahler form and the \kahler potential for the Fubini-Study metric are
\begin{equation}\label{fsadapt}
 \omega_{FS}=\frac{\sqrt{-1}}{2}\frac{dz\wedge d \bar z}{(1+|z|^2)^2},\,\,\,\phi_{FS}(z)=\log (1+|z|^2).
\end{equation} It's easy to check that $\phi_{FS}$ satisfies \eqref{adapted} and the affine coordinate is actually the \kahler normal coordinate at $z_0=0$. We equip
$\mathcal O(1)$ with its
Fubini-Study metric. In fact,  we can choose an adapted frame $e(z)$ such that
\begin{equation}|e(z)|^2_{h_{FS}}=e^{-\phi}=\frac 1{1+|z|^2}.\end{equation}





\subsection{Bergman kernels}\label{bggg}
The Bergman kernel is the orthogonal projection from the $L^2$-integral
sections to the holomorphic sections
\begin{equation}\label{projectionof}\Pi_n(z,w): L^2(M, L^n)\to \hn
\end{equation}
with respect to the inner product \eqref{innera}.
It has the following reproducing property
\begin{equation}\label{repo} \langle s_n(z),\Pi_n(z,w) \rangle_{h^n}=s_n(w),
\end{equation}
where $s_n \in \hn$ is a global holomorphic section.
Let $\{s_{n,1}, ..., s_{n,d_n}\}$
 be any orthonormal basis of  $\hn$ with respect to the inner product \eqref{innera}, then we have, 
\begin{equation}\label{bergman1} \Pi_n(z,w)=\sum_{j=1}^{d_n} s_{n,j}(z)\otimes \overline{s_{n,j}(w)}.
\end{equation}
We write $s_{n,j}=f_{n,j}e^{\otimes n}$ locally,  then we can rewrite \begin{equation}\Pi_n(z,w):=F_{n}(z,w) e^{\otimes n}(z)\otimes \overline{e^{\otimes n}(w)}\end{equation}
with the local function  \begin{equation}\label{defineF}F_n(z,w)=\sum_{j=1}^{d_n} f_{n,j}(z) \overline{f_{n,j}(w)},\end{equation}
where $F_n(z,w)$ is holomorphic in $z$ and anti-holomorphic in $w$. 

The  pointwise $h^n$-norm of the Bergman kernel has the following  Tian-Yau-Zelditch $C^\infty$-expansion on the diagonal \cite{ T,Y,Ze2},
\begin{equation}\label{full}
|\Pi_n(z,z)|_{h^{n}}=F_n(z,z)e^{-n\phi}= n^m(1+a_1(z)n^{-1}+a_2(z)n^{-2}+\cdots),
\end{equation}
where all terms $a_j$ are computable and they are polynomials of curvatures, in particular, $a_1$ is the scalar curvature of $\omega$.



Take the hyperplane
line bundle $\mathcal O(1)$ over the complex projective space $\CP$ for example, the global holomorphic sections of  $\mathcal O(1)$ are linear functions on $\C^2$ and hence the global holomorphic sections of
$L^
n = \mathcal O(n)$ are homogeneous polynomials of degree $n$. By choosing Fubini-Study metrics on $(\mathcal O(1), h_{FS})\to (\CP,\omega_{FS})$,  an orthonormal basis of $H^0(\CP,\mathcal O(n))$ under the inner product \eqref{innera} is given by
\begin{equation}\label{basis1}
\left\{\left(\sqrt{(n+1) {n\choose j}}z^j\right) e^{\otimes n}\right\}_{j=0}^{n}.
\end{equation}

Thus, the Bergman kernel for the Fubini-Study case is
\begin{equation}\label{fsberg}
F_n^{FS}(z,w)=(n+1) (1+z\bar w)^n.
\end{equation}

\subsection{Gaussian random fields}\label{Gaussianva}

Let's recall that  a complex  Gaussian measure on $\C^k$ is a measure of the
form
\begin{equation}\label{cxgauss}d\gamma_\Delta= \frac{e^{-z^*\Delta^{-1} z}}{\pi^{k}{\det\Delta}} dV_z\,,\end{equation}
where $dV_z$ denotes Lebesgue measure on $\C^k$ and $\Delta$ is a
positive definite Hermitian $k\times k$ matrix.  The  matrix
$\Delta$ is the covariance matrix.

The inner product \eqref{innera} induces a complex Gaussian probability measure $d\gamma_{d_n}$ on the space $\hn$ as,
\begin{equation}\label{standardgauss}d\gamma_{d_n}(s_n)=\frac{e^{-|a|^2}}{\pi^{d_n}}da,\,\,\,\,\, s_n=\sum_{j=1}^{d_n}a_j s_{n,j},\end{equation}
where $\{s_{n,1},..., s_{n,d_n}\}$
 is an orthonormal basis for $\hn$ and $\{a_1,..., a_{d_n}\}$ are i.i.d. standard complex Gaussian random
 variables with mean $0$ and variance $1$. 

Thus, by discarding the local frame,  the covariance kernel of the Gaussian random section $s_n$ is given by
\begin{equation}\label{cov}Cov(s_n(z),s_n(w))=F_n(z,w),\end{equation}
i.e., the Bergman kernel.

  \section{Kac-Rice type formula}\label{thm122}

In this section, we will derive a Kac-Rice type formula for the global expression of \tp of $K^m_n(z,w)$.
The formula may be derived from \cite{AT, BSZ1, DSZ1, DSZ2} but we take advantage of some simplifications to speed up the proof. We will only derive the formula on Riemann surfaces, then generalize naturally to higher dimensions. 

\subsection{Kac-Rice formula}\label{kacrice}
 We will prove the following   Kac-Rice type formula for the two-point correlation on Riemann surfaces,

\begin{lem}\label{joint1} On Riemann surfaces $(M,\omega)$, the $(2,2)$-current of the \tp of zeros and critical points of  holomorphic sections $s_n$ with respect to the Gaussian measure $d\gamma_{d_n}$ is

\begin{equation}  K^1_n(z,w)=\left(\pi^2 \int_{\mathbb C^3}p^n_{z,w}(0,0, \xi_1, \xi_2,\xi_3)  |\xi_1|^2 \left||\xi_2|^2- |\xi_3|^2\right| dV_\xi \right)dV(z)dV(w),\end{equation}
where  $dV_\xi$ is the Lebesgue measure on $\mathbb C^3$, $dV=\frac\omega\pi$ is the volume form on the Riemann suface and $p^n_{z,w}(x,y, \xi_1, \xi_2,\xi_3)$ is the joint density of Gaussian processes $(s_n(z), \nabla_{h^n}' s_n(w),\nabla'_{h^n} s_n(z),\nabla'_{h^n} \nabla'_{h^n} s_n(w),\nabla_{h^n}''\nabla_{h^n}' s_n(w))$
which can be expressed by the Bergman kernel and its Chern derivatives up to order $4$. 
\end{lem}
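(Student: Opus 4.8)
The plan is to derive the Kac--Rice formula by the standard recipe: express the expected intersection-type current as an integral of a conditional expectation against the joint Gaussian density, and then simplify the Gaussian algebra using the geometry of the Chern connection. First I would introduce the Gaussian random vector
\[
X_n(z,w)=\bigl(s_n(z),\ \nabla'_{h^n}s_n(w),\ \nabla'_{h^n}s_n(z),\ \nabla'_{h^n}\nabla'_{h^n}s_n(w),\ \nabla''_{h^n}\nabla'_{h^n}s_n(w)\bigr),
\]
viewed in a local frame and in K\"ahler normal coordinates at $z$ and at $w$, so that all components are honest complex numbers. Each component is a (complex) linear functional of the Gaussian section $s_n$, hence $X_n(z,w)$ is a centered complex Gaussian vector whose covariance matrix is computed by differentiating the covariance kernel \eqref{cov}, i.e.\ $F_n(z,w)$, in $z$, $\bar z$, $w$, $\bar w$ up to total order $4$; this is where the phrase ``Chern derivatives up to order $4$'' comes from. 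Note that on a Riemann surface $\nabla'_{h^n}\nabla'_{h^n}s_n$ has one component and $\nabla''_{h^n}\nabla'_{h^n}s_n$ has one component, so $\xi=(\xi_1,\xi_2,\xi_3)\in\mathbb C^3$ corresponds to $(\nabla'_{h^n}s_n(z),\ \nabla'_{h^n}\nabla'_{h^n}s_n(w),\ \nabla''_{h^n}\nabla'_{h^n}s_n(w))$, while the first two ``pinned'' slots are $s_n(z)=0$ and $\nabla'_{h^n}s_n(w)=0$.

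Next I would apply the coarea/Kac--Rice formula twice. Pairing $K^1_n$ with test forms as in \eqref{firstdef}, the $z$-integration over the zero divisor $\{s_n=0\}$ contributes the Jacobian factor $\|\nabla'_{h^n}s_n(z)\|^2$ on a Riemann surface (this is the Poincar\'e--Lelong mechanism: the $(1,1)$-current of integration over a zero divisor of a holomorphic function $f$ is $\frac{i}{\pi}\partial\dbar\log|f|$, whose expectation localizes to a conditional density weighted by $|f'|^2$), which after using normal coordinates becomes $|\xi_1|^2$. The $w$-integration over the critical-point set $\{\nabla'_{h^n}s_n=0\}$, which on a Riemann surface is a codimension-$2$ real zero set of the map $s_n\mapsto \nabla'_{h^n}s_n(w)$, contributes the modulus of the real Jacobian determinant of $\nabla_{h^n}s_n$ at $w$; using $\nabla=\nabla'+\nabla''$ and computing the $2\times 2$ real Jacobian of $(\nabla'_{h^n}s_n,\ \overline{\nabla'_{h^n}s_n})$ in terms of the holomorphic and antiholomorphic second derivatives, this determinant equals $\bigl|\,|\nabla'_{h^n}\nabla'_{h^n}s_n(w)|^2-|\nabla''_{h^n}\nabla'_{h^n}s_n(w)|^2\,\bigr|$, i.e.\ $\bigl||\xi_2|^2-|\xi_3|^2\bigr|$. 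Combining, the expected two-point current is
\[
K^1_n(z,w)=\Bigl(\int_{\mathbb C^3}\mathbf E\bigl[\,|\xi_1|^2\,\bigl||\xi_2|^2-|\xi_3|^2\bigr|\ \big|\ s_n(z)=0,\ \nabla'_{h^n}s_n(w)=0\bigr]\, p^n_{z,w}(0,0)\,dV_\xi\Bigr)\,dV(z)\,dV(w),
\]
and rewriting the conditional expectation times the density of the conditioning as an integral of the full joint density $p^n_{z,w}(0,0,\xi_1,\xi_2,\xi_3)$ against $|\xi_1|^2\bigl||\xi_2|^2-|\xi_3|^2\bigr|$ gives the stated formula once the normalization constants (the Fubini--Study/Lebesgue factors and the $\pi$'s from the Poincar\'e--Lelong $\frac1\pi$ and from the codimension-$2$ delta) are tracked; these collect into the overall $\pi^2$.

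The main obstacle, and the place I would be most careful, is the exact bookkeeping of the two Jacobian weights and their normalizing constants. The $z$-weight requires justifying the Kac--Rice/Poincar\'e--Lelong identity for the expected zero-divisor current of a Gaussian holomorphic section and extracting the clean $|\xi_1|^2$ in normal coordinates (where $\partial\phi(z_0)=0$, so $\nabla'_{h^n}s_n(z_0)=f_n'(z_0)$ and no metric correction appears); the $w$-weight requires correctly identifying $\nabla_{h^n}s_n=0$ as a smooth, non-holomorphic system and computing its real $2\times2$ Jacobian, which is exactly where the antiholomorphic second derivative $\nabla''_{h^n}\nabla'_{h^n}s_n$ enters and produces the characteristic $\bigl||\xi_2|^2-|\xi_3|^2\bigr|$ difference rather than a sum. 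I would also need the a.s.\ nondegeneracy of the conditional Gaussian (so the density $p^n_{z,w}$ exists and the formula is finite), which follows from positivity of $F_n$ and its derivative Gram matrix away from the diagonal; and I would remark that everything is a computation with $F_n(z,w)$, so the higher-dimensional version follows by replacing $|\xi_1|^2$ with the Jacobian of the codimension-$1$ zero set and $\bigl||\xi_2|^2-|\xi_3|^2\bigr|$ with $|\det(\nabla'\nabla's_n)\ ?\ |$-type expression coming from the real $2m\times 2m$ Jacobian of $\nabla_{h^n}s_n$, as indicated in the text.
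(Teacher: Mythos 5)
Your proposal is correct and follows essentially the same route as the paper: a Kac--Rice/coarea computation with the holomorphic Jacobian $|\xi_1|^2$ from the zero divisor, the real Jacobian $\bigl||\xi_2|^2-|\xi_3|^2\bigr|$ from the smooth critical-point equation, and the joint Gaussian density determined by the Bergman kernel and its derivatives up to order $4$. The only cosmetic difference is that the paper makes the local expression global by noting that the conditioning itself ($f_n(z)=0$, resp.\ $g_n(w)=0$) kills the lower-order terms, so ordinary derivatives coincide with Chern derivatives on the conditioning event, whereas you invoke normal coordinates; and the paper attributes the $\pi^2$ simply to the normalization $dV=\omega/\pi$.
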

\begin{proof}
The strategy to get this formula is to find the local expression for the \tp  under the local coordinate, then we turn it to be global.

We denote the zero set $$\mathcal Z:=\left\{z\in M:\,\,s_n(z)=0\right\}$$ and the critical point set $$\mathcal C:=\left\{z\in M:\,\,\nabla_{h^n}s_n(z)=0\right\}.$$

In the local coordinate $U\cong\C$ and a local trivialization of $L$, we write Gaussian random holomorphic sections   as $s_n=f_ne^{\otimes n}$ where $f_n$ is a holomorphic function, we denote locally $$g_n(z):=\nabla_{h^n} s_n=(\frac{\d f_n}{\d z}-n\frac{\d\phi}{\d z} f_n)e^{\otimes n}\otimes dz.$$ Then the set of  critical points of $s_n$ is the same as zeros of $g_n=0$ (recall definition of Chern connection \eqref{dechern1}). We denote locally
$\mathcal Z_U:=\left\{z\in \C:\,\,  f_n(z)=0\right\}$  as zeros in $U$;  denote $\mathcal C_U:=\left\{z\in \C:\,\,  g_n(z)=0\right\}$ as the set of critical points in $U$.
 


By definition of the delta function, for any smooth test functions $\psi$ and $\varphi$ on $M$, we have locally,
\begin{align*}&\langle \sum_{z\in \mathcal Z_U}\delta_{z}\otimes \sum_{w\in \mathcal C_U}\delta_{w},\psi\otimes \varphi \rangle\\
=& \sum_{z:\,\,\, f_n(z)=0}\psi(z) \sum_{w:\,\,\, g_n(w)=0}\varphi(w)\\=& \int_{\C\times\C} \delta_0(f_n(z))\delta_0(g_n(w))\psi(z) \varphi(w)df_n(z)\wedge d\bar{f}_n(z)\wedge dg_n(w)\wedge d\bar{g}_n(w)\\=&
\int_{\C\times \C}  \delta_0(f_n)\delta_0(g_n)\psi(z) \varphi(w)  |\frac{\d f_n}{\d z}|^2 \left| |\frac{\d g_n}{\d w}|^2-|\frac{\partial g_n}{\d\bar w}|^2\right| d\ell_z d\ell_w,
\end{align*} where we denote $d\ell $ as the Lebesgue measure on $\C$.

Now we can turn the above  integral to be global by the following observations (by discarding the local frame).  Let's first recall the decomposition of the Chern connection $\nabla_{h^n}=\nabla'_{h^n}+\nabla_{h^n}''$ with $\nabla_{h^n}' =d_z-n\frac{\d \phi}{\d z}\,\,\,\mbox{and}\,\,\, \nabla_{h^n}'' =d_{\bar z}$.     At $z_0$, the zero of the holomorphic section $s_n$ where $f_n(z_0)=0$, we have $$ \nabla_{h^n}' f_n(z_0)=\frac{\d f_n}{\d z}(z_0)-n\frac{\d \phi}{\d z}f_n(z_0)=\frac{\d f_n}{\d z}(z_0).$$  At the critical point $w_0$ with $g_n(w_0)=0$,  recall the definition $g_n:=\nabla_{h^n} f_n= \nabla'_{h^n} f_n$, by taking derivatives on both sides, we have $$\nabla_{h^n}' \nabla_{h^n}' f_n(w_0)=\nabla_{h^n}' g_n(w_0)=\frac{\d g_n}{\d w}(w_0)-n\frac{\d \phi}{\d w}g_n(w_0)=\frac{\d g_n}{\d w}(w_0)$$ and $$\nabla_{h^n}'' \nabla_{h^n}' f_n(w_0)=\nabla_{h^n}'' g_n(w_0)= \frac{\d g_n}{\d\bar  w}(w_0).$$ 

Hence,  the global expression for the above integration is, 
$$\int_{M\times M}  \delta_0(s_n(z))\delta_0(\nabla_{h^n}' s_n(w))\psi(z) \varphi(w) $$$$\times |\nabla_{h^n}' s(z)|^2 \left| |\nabla_{h^n}' \nabla_{h^n}' s_n(w)|^2- |\nabla_{h^n}''\nabla_{h^n}' s_n(w)|^2\right| \omega_z\wedge \omega_w.$$

By taking the expectation on both sides,  we have globally, \begin{align*}&\e\langle \sum_{z\in \mathcal Z}\delta_{z}\otimes \sum_{w\in \mathcal C}\delta_{w},\psi\otimes \varphi \rangle\\
=&
\int_{M\times M} \psi(z) \varphi(w)\left(\int_{\mathbb C^3}p^n_{z,w}(0,0, \xi_1, \xi_2,\xi_3)  |\xi_1|^2 \left||\xi_2|^2-|\xi_3|^2\right| dV_\xi\right)\omega_z\wedge \omega_w,
\end{align*} 
where  $dV_\xi$ is the Lebesgue measure on $\mathbb C^3$ and $p^n_{z,w}(x,y, \xi_1, \xi_2,\xi_3)$ is the joint density of Gaussian processes $(s_n(z), \nabla'_{h^n} s_n(w),\nabla'_{h^n} s_n(z),\nabla_{h^n}' \nabla_{h^n}' s_n(w),\nabla_{h^n}''\nabla_{h^n}' s_n(w))$.

Hence, the \tp in the case of Riemann surfaces is given by
\begin{align*}&\e \left(\sum_{z\in \mathcal Z }\delta_{z}\otimes \sum_{w\in \mathcal C }\delta_{w}\right):= K^1_n(z,w)\\
&=\left(\int_{\mathbb C^3}p^n_{z,w}(0,0, \xi_1, \xi_2,\xi_3)  |\xi_1|^2 \left||\xi_2|^2- |\xi_3|^2\right| dV_\xi \right)\omega(z)\wedge \omega(w).\end{align*} 
The extra factor $\pi^2$ in Lemma \ref{joint1} appears since we  define the volume form $dV:=\frac \omega \pi$. 
For the last statement, note that the covariance kernel of the Gaussian process $s_n$ is the Bergman kernel, i.e., $\e(s_n(z)\overline{s_n(w)})=F_n(z,w)$ (see  \eqref{cov} ), hence the covariance matrix of Gaussian processes $(s_n(z), \nabla_{h^n}' s_n(w),\nabla' s_n(z),\nabla_{h^n}' \nabla_{h^n}' s_n(w),\nabla_{h^n}''\nabla_{h^n}' s_n(w))$ can be expressed by the Bergman kernel and its Chern derivatives up to order $4$ (see \cite{AT}), this completes the proof of Lemma \ref{joint1}.
\end{proof}
 \subsection{Higher dimensions}
For higher dimensions, given a smooth section $s_n=f_ne^{\otimes n}$, the Chern connection has the decomposition (see \eqref{dechern1}), 
  $$\nabla_{h^n}'s_n=\sum_{i=1}^m\left(\frac{\partial f_n}{\partial z_i}-n\frac{\partial \phi}{\partial z_i} f_n\right) e^{\otimes n}\otimes dz_i, \,\,\,\,\nabla_{h^n}''s_n=\sum_{i=1}^m \frac{\partial f_n}{\partial \bar z_i}  e^{\otimes n}\otimes d\bar z_i. $$
 
We further rewrite $\nabla_{h^n}'$ and $\nabla_{h^n}''$ as, 
 $$\nabla_{h^n}' =\sum_{i=1}^m\nabla_{h^n,i}', \,\,\,\,\nabla_{h^n}'' =\sum_{i=1}^m\nabla_{h^n,i}'',  $$
where we define
$$\nabla_{h^n,i}'s_n= \left(\frac{\partial f_n}{\partial z_i}-n\frac{\partial \phi}{\partial z_i} f_n\right) e^{\otimes n}\otimes dz_i,\,\,\,\,\nabla_{h^n,i}''s_n= \frac{\partial f_n}{\partial \bar z_i}  e^{\otimes n}\otimes d\bar z_i.$$

Following the computations in \S\ref{kacrice} (or \S 2 in \cite{DSZ1}), we have, 
 \begin{lem}\label{joint1222} The $(m+1,m+1)$-current of the \tp of zeros and critical points of Gaussian holomorphic sections on any compact \kahler manifold of complex $m$-dimensional is
\begin{align*}  K^m_{n}(z,w)&=\left(\pi^{m+1} \int_{\mathbb C^{m^2+2m}}p^n_{z,w}(0,0, \xi, H_1,H_2)  \|\xi\|^2 \left|\det(H_1^*H_1 - H_2^*H_2)\right| dV_\xi dV_H \right)\\
&\times \frac{\omega(z)}{\pi}\wedge \frac{\omega^m(w)}{\pi^m m!},\end{align*}
where  $d	V_\xi$ and $dV_{H}$ are Lebesgue measures on $\xi\in \mathbb C^m$ and $(H_1, H_2)\in \mathbb C^{m(m+1)}$ where $H_1$ and $H_2$ are two symmetric $m\times m$ matrices, $\|\xi\|^2$ is the norm square of the vector $\xi$  and $p^n_{z,w}(x,y, \xi, H_1,H_2)$ is the joint density of Gaussian processes $\left(s_n(z), (\nabla_{h^n,i}' s_n(w))_{i=1}^m,(\nabla'_{h^n,i} s_n(z))_{i=1}^m,(\nabla'_{h^n,i} \nabla'_{h^n,j} s_n(w))_{i,j},(\nabla_{h^n,i}''\nabla_{h^n,j}' s_n(w))_{i,j}\right)$ with $1\leq i\leq  j\leq m$.
\end{lem}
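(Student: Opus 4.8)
The plan is to mirror the derivation of Lemma~\ref{joint1}, replacing the scalar Jacobian factors by determinants. First I would pass to a coordinate patch $U\cong\mathbb C^m$ around $z_0$ with an adapted frame, write a Gaussian section locally as $s_n=f_ne^{\otimes n}$ with $f_n$ holomorphic, and record the two incidence sets: the zero set $\mathcal Z_U=\{f_n=0\}$, a complex hypersurface, and the critical set $\mathcal C_U=\{G_n=0\}$, where $G_n=(G_{n,1},\dots,G_{n,m})$ with $G_{n,i}=\partial f_n/\partial z_i-n(\partial\phi/\partial z_i)f_n$, which is generically a discrete set. Pairing against a test $(m-1,m-1)$-form $\psi$ in $z$ and a test function $\varphi$ in $w$, the local quantity $\langle\sum_{\mathcal Z_U}\delta_z\otimes\sum_{\mathcal C_U}\delta_w,\psi\otimes\varphi\rangle$ factors as $\bigl(\int_{\mathcal Z_U}\psi\bigr)\bigl(\sum_{w\in\mathcal C_U}\varphi(w)\bigr)$, and I would give each factor a co-area / Kac--Rice representation.

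For the hypersurface of zeros I would use the Poincar\'e--Lelong (change of variables) identity $[\mathcal Z_U]=\delta_0(f_n)\,\tfrac i2\partial f_n\wedge\bar\partial\bar f_n$ of $(1,1)$-currents, so that $\int_{\mathcal Z_U}\psi=\int_U\delta_0(f_n)\,\tfrac i2\partial f_n\wedge\bar\partial\bar f_n\wedge\psi$; discarding the frame and using that on $\mathcal Z_U$ the correction $n(\partial\phi/\partial z_i)f_n$ vanishes, so that $\nabla'_{h^n,i}s_n=\partial f_n/\partial z_i$ there, this contributes the weight $\|\xi\|^2$ with $\xi=(\nabla'_{h^n,i}s_n(z))_{i=1}^m$ --- the higher-dimensional analogue of the factor $|\partial f_n/\partial z|^2$ in Lemma~\ref{joint1}. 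For the discrete critical points I would write $\sum_{w\in\mathcal C_U}\delta_w=\delta_0(G_n)\,|\det_{\mathbb R}DG_n|\,dV_w$; since $G_n\colon\mathbb C^m\to\mathbb C^m$ is smooth but not holomorphic, its real differential in the $(dw,d\bar w)$-basis is the $2m\times2m$ block matrix $\left(\begin{smallmatrix}A&B\\ \bar B&\bar A\end{smallmatrix}\right)$ with $A=(\partial G_{n,i}/\partial w_k)$ and $B=(\partial G_{n,i}/\partial\bar w_k)$, and a short Wirtinger-calculus computation gives $|\det_{\mathbb R}DG_n|=|\det(A^*A-B^*B)|$ (whose $m=1$ shadow is the factor $\bigl||\xi_2|^2-|\xi_3|^2\bigr|$). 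Taking the K\"ahler normal coordinate centered at $w$ kills the Christoffel symbols of $\Omega^{1,0}_M$ and the quadratic part of $\phi$, so $A$ and $B$ become the symmetric matrices $H_1=(\nabla'_{h^n,i}\nabla'_{h^n,j}s_n(w))_{i\le j}$ and $H_2=(\nabla''_{h^n,i}\nabla'_{h^n,j}s_n(w))_{i\le j}$, producing the factor $|\det(H_1^*H_1-H_2^*H_2)|$.

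Next I would globalize exactly as in the proof of Lemma~\ref{joint1} --- the frame factors $e^{\otimes n}$ cancel, and the Chern-derivative corrections vanish on the respective incidence sets --- take the expectation, and invoke Fubini together with the definition of the joint density $p^n_{z,w}$ of the Gaussian vector $\bigl(s_n(z),(\nabla'_{h^n,i}s_n(w))_i,(\nabla'_{h^n,i}s_n(z))_i,(\nabla'_{h^n,i}\nabla'_{h^n,j}s_n(w))_{i\le j},(\nabla''_{h^n,i}\nabla'_{h^n,j}s_n(w))_{i\le j}\bigr)$. This converts $\e\langle\sum_{\mathcal Z}\delta_z\otimes\sum_{\mathcal C}\delta_w,\psi\otimes\varphi\rangle$ into the integral of $\psi\,\varphi$ against $\bigl(\int p^n_{z,w}(0,0,\xi,H_1,H_2)\,\|\xi\|^2\,|\det(H_1^*H_1-H_2^*H_2)|\,dV_\xi\,dV_H\bigr)\,\omega(z)\wedge\omega^m(w)$ up to volume-form normalizations, and rewriting the K\"ahler forms in terms of $\tfrac{\omega(z)}\pi$ and $\tfrac{\omega^m(w)}{\pi^m m!}$ produces the displayed $\pi^{m+1}$ prefactor, just as the $\pi^2$ prefactor appeared in Lemma~\ref{joint1}. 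The last clause holds because $\mathrm{Cov}(s_n(z),s_n(w))=F_n(z,w)$, so every entry of the covariance matrix of this vector is obtained by applying the relevant Chern derivatives in $z$ and in $w$ to the Bergman kernel $F_n$; the Hessian--Hessian entries use two derivatives in each variable, so $F_n$ and its Chern derivatives up to order $4$ suffice.

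The step I expect to require genuine care (rather than a new idea) is twofold. First, the linear-algebra identity $|\det_{\mathbb R}DG_n|=|\det(H_1^*H_1-H_2^*H_2)|$, i.e.\ the correct higher-rank replacement for $\bigl||\xi_2|^2-|\xi_3|^2\bigr|$: this uses that $H_1$ and $H_2$ are symmetric and hence relies on the normal-coordinate reduction. Second, the (mild) degeneracy of the mixed Hessian: for a holomorphic section one has $\nabla''_{h^n,i}\nabla'_{h^n,j}s_n=-n\,g_{i\bar j}(w)\,s_n(w)$, so in normal coordinates $H_2=-n\,s_n(w)\,I_m$ lies on the line of scalar matrices, and the $H_2$-integration in the formula must be read along this line (equivalently, one first reduces to the genuinely non-degenerate Gaussian vector, which is automatic when $m=1$). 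Everything else --- frame bookkeeping, the $\pi$-powers and factorials, and the finiteness of the density --- runs parallel to \S\ref{kacrice} and to \S2 of \cite{DSZ1}.
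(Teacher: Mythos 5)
You follow the same route as the paper, which proves this lemma only by the remark that it ``follows the computations in \S 3.1 (or \S 2 of \cite{DSZ1})''; your write-up is a faithful expansion of that, and you correctly isolate the two real issues (the determinantal Jacobian factor and the degeneracy of the mixed Hessian). One step in your second paragraph is misjustified, however. The identity $|\det_{\mathbb R}DG_n|=|\det(A^*A-B^*B)|$ is false for general $A,B$, and symmetry of both matrices does not rescue it: for $A=\mathrm{diag}(1,2)$ and $B=\left(\begin{smallmatrix}0&1\\1&0\end{smallmatrix}\right)$, both real symmetric, one computes $\det\left(\begin{smallmatrix}A&B\\ \bar B&\bar A\end{smallmatrix}\right)=1$ while $\det(A^*A-B^*B)=\det\mathrm{diag}(0,3)=0$. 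What the Wirtinger computation actually gives is only $|\det_{\mathbb R}DG_n|=\bigl|\det\left(\begin{smallmatrix}H_1&H_2\\ \overline{H_2}&\overline{H_1}\end{smallmatrix}\right)\bigr|$; the passage to $|\det(H_1^*H_1-H_2^*H_2)|$ needs the bottom two blocks to commute, and this holds here precisely because of the fact you relegate to a ``caveat'' in your last paragraph: on the support of the (degenerate) joint density one has $H_2=-n\,s_n(w)\,(\phi_{i\bar j}(w))$, which is a scalar matrix in adapted normal coordinates at $w$, and then $\det\left(\begin{smallmatrix}H_1&cI\\ \bar cI&\overline{H_1}\end{smallmatrix}\right)=\det(H_1\overline{H_1}-|c|^2I)=\det(H_1^*H_1-H_2^*H_2)$, using $\overline{H_1}=H_1^{*}$ from the symmetry of the holomorphic Hessian. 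So the scalarity of $H_2$ is not an afterthought about how to read the $H_2$-integration; it is the hypothesis that makes your determinant identity true. With that dependency reordered, your argument is complete and coincides with the paper's (and with \cite{DSZ1}).
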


\section{Universality and scaling}\label{us}

It can be tell from Lemmas \ref{joint1} and \ref{joint1222} that the \tp is expressed by the Bergman kernel and its Chern derivatives, hence, the rescaling limits of \tp  depend only on the rescaling limits of Bergman kernel and its Chern derivatives. Our plan is the following. In subsection \S \ref{BF}, we will describe the Bergman kernel for the Bargmann-Fock space. This model case provides the universal rescaling limit for Bergman kernels on any \kahler manifold. Actually, the universal rescaling limit is the Bergman kernel of the Bargmann-Fock space of level $1$. In  \S \ref{proofmain}, we will prove Theorem \ref{main} for Riemann surfaces. In \S \ref{higherdims}, we will sketch the proof for the higher dimensions. 

\subsection{Bargmann-Fock space}\label{BF}
The Bargmann-Fock space is the space of entire functions on
$\mathbb C^m$ which are $L^2$-integral with respect to the Bargmann-Fock metric. To be more precise, let's
take the trivial line bundle $(L:=\mathbb C\times \mathbb C^m,h_{BF}(z))$ over $(\mathbb C^m,  \pi^{-m}dV_z)$ with the Hermitian Bargmann-Fock metric $h_{BF}(z):=e^{-\|z\|^2}$ and the Lebesgue measure $dV_z$ on $\mathbb C^m$,  here we denote $\|z\|^2=|z_1|^2+\cdots+|z_m|^m$.  The line bundle is trivial, we may use the frame $e_U=e_{\mathbb C}=1$. By \eqref{dechern1}, the Chern connection in this case  is given by
\begin{equation}\label{bfc}\nabla_{h_{BF}}=\nabla_{h_{BF}}'+\nabla_{h_{BF}}''\,\,\,\mbox{with}\,\,\,\nabla_{h_{BF}}'=\sum (d_{z_j}-\bar z_j) \,\,\, \mbox{and} \,\,\,\nabla_{h_{BF}}''=\sum d_{\bar z_j}.\end{equation}

 We raise the power of the line bundle to $L^{\otimes n}$ and define the Bargmann-Fock space $\mathcal H(\mathbb C^m, \pi^{-m}e^{-n\|z\|^2}dV_z)$ of level $n$ to be the space of $L^2$-entire functions with respect to the inner product (recall \eqref{innera})
\begin{equation}\langle f, g\rangle_{h_{BF}^
n}= \int_{\mathbb C^m} f\bar g e^{-n\|z\|^2}\pi^{-m}dV_z.\end{equation}
 The Bargmann-Fock space is a Hilbert space  and the orthonormal basis is given by monomials 
\begin{equation}\label{timem}\left\{ \frac{z^\alpha}{\sqrt{\frac{\alpha !}{n^{m+|\alpha|}}}},\,\,\,\, \alpha\in\mathbb Z_{+}^m \right\}, \end{equation}
where we denote $z^\alpha=z_1^{\alpha_1}\cdots z_m^{\alpha_m} $ and $|\alpha|=|\alpha_1|+\cdots+|\alpha_m|$.

 Then  the Bergman kernel off the
diagonal for the Bargmann-Fock space of level $n$ is (recall \eqref{defineF}), 
\begin{equation}F_n^{BF}(z,w)= \sum_{\alpha\in \mathbb Z_{+}^m}\frac{z^\alpha \bar w^\alpha}{ \frac{\alpha !}{n^{m+|\alpha|}}}=n^m\sum_{\alpha\in \mathbb Z_{+}^m}\frac{n ^{|\alpha|}  z^\alpha \bar w ^\alpha}{ \alpha ! }=n^m e^{nz\cdot \bar w}\end{equation}
where $z\cdot \bar w=z_1\bar w_1+\cdots +z_m\bar w_m$.

The following asymptotic expansion is proved in \cite{BSZ1} which states that the Bergman kernel admits a universal rescaling limit on any \kahler manifold. 
Let $z_0 \in M$ and choose \kahler normal coordinates in a neighborhood of $z_0$  and adapted frame $e_L$, the Bergman kernel admits the full  expansion,
\begin{equation}\begin{split}
n^{-m}F_n(z_0+\frac u{\sqrt n}, z_0+\frac v{\sqrt n})&=e^{u \cdot \bar v}+O(n^{-1/2})\\&=F_1^{BF}(u,v)+O(n^{-1/2}),
\end{split}\end{equation}
where $F_1^{BF}(u,v)$ is the Bergman kernel for the Bargmann-Fock space of level $1$.

The proof of this asymptotic expansion is based on  Boutet de Monvel-Sjostrand parametrix 
construction and the stationary phase method. As a remark, the rescaling limits of Chern derivatives of the Bergman kernel are also universal by taking Chern derivatives on both sides  of the above full expansion \cite{BSZ1}. 

An example to illustrate this is the Bergman kernel for the hyperplane line bundle over complex projective space. Recall \eqref{fsberg} of Bergman kernel  for the Fubini-Study metric $(\mathcal O(n),h_{FS})\to (\CP,\omega_{FS})$,  by choosing the \kahler normal coordinate at $z_0$, 
the rescaling limit of the Bergman kernel satisfies the following pointwise limit,
\begin{equation}\lim_{n\to\infty} n^{-1} F^{FS}_n(z_0+\frac u{\sqrt n}, z_0+\frac v{\sqrt n})=\lim_{n\to\infty}(1+\frac{u \cdot \bar v}n)^n= e^{u   \bar v}.\end{equation}

\subsection{Proof of Theorem \ref{main} for Riemann surfaces}\label{proofmain}
Let's first derive the universal rescaling limit of the \tp between zeros and critical points on Riemann surfaces. As proved in Lemma \ref{joint1}, it's equivalent to derive the universal rescaling limit of
the joint density.   This is workable since the joint density is expressed by the Bergman kernel and the Bergman kernel has the universal rescaling limit; furthermore, the limit is achieved  by the Bargmann-Fock space of level $1$. Hence, following the main idea in \cite{BSZ1}, to prove the main result for Riemann surfaces,   it's enough to consider the following Gaussian analytic functions,
\begin{equation}\label{analytic}f(z)=\sum_{j=0}^\infty \frac {a_j}{\sqrt{j!}}z^j,\end{equation}
where $a_j$ are i.i.d. standard complex Gaussian random variables with mean $0$ and variance $1$ and \begin{equation}\left\{\frac {z^j}{\sqrt{j!}}\right\}_{j=0}^\infty\end{equation} is an orthonormal basis of the Bargmann-Fock space $\mathcal H(\mathbb C, \pi^{-1}e^{-|z|^2}dV_z)$ (recall \eqref{timem}). The \tp between zeros and critical points (defined by Chern connection \eqref{bfc}) of $f(z)$ is the rescaling limit of \tp of Gaussian random holomorphic sections on any \kahler manifold because the covariance kernel of $f(z)$ is 
\begin{equation}cov(f(z), f(w))=F_1^{BF}(z,w)=e^{z \bar w},\end{equation}
i.e.,  the Bergman kernel of the Bargmann-Fock space of level $1$.

Because of the universality of Bergman kernels, we have the following 

\begin{lem}\label{lemma2}
On Riemann surfaces, the    (2,2)-current of the \tp of Gaussian random holomorphic sections admits the following pointwise universal limit
$$ \lim_{n\to\infty}K^1_n(z_0+\frac u{\sqrt n}, z_0+\frac v{\sqrt n})=K_{BF}^{1}(u,v),$$
where $K_{BF}^{1}(u,v)$ is the \tp between zeros and critical points of the Gaussian random analytic function $f(z)$ defined in \eqref{analytic}.

\end{lem}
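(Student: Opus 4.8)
The plan is to push the Kac--Rice identity of Lemma \ref{joint1} through the rescaling, using two facts already in hand: the joint density $p^n_{z,w}$ is the Gaussian density of a vector whose covariance is assembled from the Bergman kernel $F_n$ and its Chern derivatives of order $\le 4$; and, by the Tian--Yau--Zelditch/Bleher--Shiffman--Zelditch expansion of \S\ref{BF}, the rescaled kernel $n^{-1}F_n(z_0+\tfrac u{\sqrt n},z_0+\tfrac v{\sqrt n})$ converges in $C^\infty$ on compact subsets to $e^{u\bar v}$, with Chern differentiation commuting with the limit. Working in \kahler normal coordinates about $z_0$ with an adapted frame, I would first record how the substitution $z=z_0+\tfrac u{\sqrt n}$, $w=z_0+\tfrac v{\sqrt n}$ rescales each entry of the vector $\bigl(s_n(z),\nabla'_{h^n}s_n(w),\nabla'_{h^n}s_n(z),\nabla'_{h^n}\nabla'_{h^n}s_n(w),\nabla''_{h^n}\nabla'_{h^n}s_n(w)\bigr)$: each coordinate derivative carries a factor $\sqrt n$, and $\tfrac1{\sqrt n}\nabla'_{h^n}$ rescales to the level-one Bargmann--Fock connection $\nabla'_{h_{BF}}=\partial_u-\bar u$. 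Absorbing these powers of $\sqrt n$ into a diagonal rescaling of the Gaussian vector and a matching rescaling of the integration variables $\xi_1,\xi_2,\xi_3$, the powers of $n$ produced by the $\xi$-Jacobian, by the factor $|\xi_1|^2\bigl||\xi_2|^2-|\xi_3|^2\bigr|$, by the $n^m$ of the Bergman expansion, and by $dV(z)\,dV(w)$ all cancel (a routine accounting of normalizing factors, of exactly the kind performed at the end of the proof of Lemma \ref{joint1}). The problem is thereby reduced to showing that
\[
\pi^2\int_{\C^3}p^n_{z,w}(0,0,\xi_1,\xi_2,\xi_3)\,|\xi_1|^2\bigl||\xi_2|^2-|\xi_3|^2\bigr|\,dV_\xi
\]
converges to the analogous integral with $p^n_{z,w}$ replaced by the joint density $p^\infty_{u,v}$ of the Gaussian vector $X_\infty:=\bigl(f(u),g(v),g(u),\nabla'_{h_{BF}}g(v),\nabla''_{h_{BF}}g(v)\bigr)$, where $f$ is the Gaussian analytic function \eqref{analytic} and $g:=\nabla_{h_{BF}}f$, so that $g(z)=f'(z)-\bar z f(z)$.

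Convergence of the covariance matrices $\Delta_n\to\Delta_\infty$ is immediate from the $C^\infty$ expansion: each entry of $\Delta_n$ arises by applying a fixed pair of Chern derivatives to $n^{-1}F_n$ at the rescaled points and evaluating, hence converges to the result of applying the same derivatives to $e^{u\bar v}$, which is precisely the matching entry of the covariance of $X_\infty$ (in particular $\mathrm{cov}(f(u),f(w))=e^{u\bar w}$). To upgrade this to convergence of the densities at the fixed point $(0,0,\xi)$, and more importantly to push the limit past the $\xi$-integral, I need $\Delta_\infty$ nondegenerate. Fixing $u\ne v$, one checks that $X_\infty$ is a linear isomorphic image of the jet vector $\bigl(f(u),f'(u),f(v),f'(v),f''(v)\bigr)$ (using $\nabla''_{h_{BF}}g(v)=-f(v)$ and $\nabla'_{h_{BF}}g(v)=f''(v)-2\bar v f'(v)+\bar v^2f(v)$), and the covariance of the latter is positive definite for distinct points because point-evaluations and derivative-evaluations are linearly independent bounded functionals on the Bargmann--Fock space, a reproducing kernel Hilbert space with kernel $e^{z\bar w}$; for instance the conditioned pair $(f(u),g(v))$ already has Gram determinant $e^{|u|^2+|v|^2}\bigl(1-|u-v|^2e^{-|u-v|^2}\bigr)>0$. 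Hence $\Delta_n$ is uniformly nondegenerate for large $n$, so $p^n_{z,w}(0,0,\cdot)$ is uniformly bounded with a uniform Gaussian tail $\le Ce^{-c|\xi|^2}$ on $\C^3$; as $|\xi_1|^2\bigl||\xi_2|^2-|\xi_3|^2\bigr|$ grows only polynomially, dominated convergence delivers the limit. I expect this interchange of limit and integral to be the main obstacle: pointwise convergence of the integrands is cheap, but one must exclude escape of mass to infinity in the $\xi$-variables, which is exactly why the uniform nondegeneracy of $\Delta_n$, hence uniform Gaussian decay, has to be set up first. The diagonal locus $u=v$, where $\Delta_\infty$ degenerates, is handled separately by a direct degenerate computation or by continuity of both sides.

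Finally, running the computation of Lemma \ref{joint1} verbatim with the Bergman kernel replaced by $e^{u\bar w}=\mathrm{cov}(f(u),f(w))$ and the Chern connection replaced by $\nabla_{h_{BF}}$ identifies
\[
\pi^2\int_{\C^3}p^\infty_{u,v}(0,0,\xi_1,\xi_2,\xi_3)\,|\xi_1|^2\bigl||\xi_2|^2-|\xi_3|^2\bigr|\,dV_\xi
\]
as the two-point correlation between the zeros of $f$ and the critical points $\{g=0\}$, that is, as $K^1_{BF}(u,v)$. Combining the three steps yields $\lim_{n\to\infty}K^1_n\bigl(z_0+\tfrac u{\sqrt n},z_0+\tfrac v{\sqrt n}\bigr)=K^1_{BF}(u,v)$.
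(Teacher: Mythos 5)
Your proposal is correct and follows essentially the same route as the paper, which states only that the Kac--Rice density is built from the Bergman kernel and its Chern derivatives, invokes the universal near-diagonal scaling limit $n^{-m}F_n \to e^{u\cdot\bar v}$, and defers the remaining details to \cite{BSZ1}. The ingredients you supply explicitly --- the $\sqrt n$-rescaling of the Chern connection to $\partial_u-\bar u$ in \kahler normal coordinates, the uniform nondegeneracy of the limiting covariance for $u\neq v$ (your Gram determinant $e^{|u|^2+|v|^2}(1-|u-v|^2e^{-|u-v|^2})$ matches the paper's $\det A=e^{r^2}-r^2$), and the dominated-convergence step in the $\xi$-integral --- are exactly the details the paper leaves to the reference.
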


We refer to  \cite{BSZ1} for more details of this lemma.  This lemma completes the first part of our main Theorem \ref{main} for Riemann surfaces. 
In the followings, let's  derive the formula for $K_{BF}^{1}(u,v)$ and estimate  $K^1_{BF}(u,v)$
  as $|u-v|$ tends to $0$ and $\infty$. 
\subsubsection{Covariance matrix}
By Lemma \ref{joint1}, the \tp between  zeros and critical points of the Gaussian random analytic function $f(z)$ is
\begin{equation}\label{kbf}K_{BF}^{1}(u,v)= \left(\pi^2\int_{\mathbb C^3}p_{u,v}(0,0, \xi_1, \xi_2,\xi_3)  |\xi_1|^2 \left||\xi_2|^2- |\xi_3|^2\right|  dV_\xi\right)\frac {d\ell_u}\pi\wedge \frac {d\ell_v}\pi ,\end{equation}
 where  $p_{z,w}(0,0, \xi_1, \xi_2,\xi_3)$ is the joint density of  Gaussian processes 
 $(f(u), \nabla_{h_{BF}}' f(v),$
 $\nabla'_{h_{BF}} f(u),\nabla_{h_{BF}}' \nabla_{h_{BF}}' f(v),\nabla_{h_{BF}}''\nabla_{h_{BF}}' f(v))$ and we denote $d\ell_z$ as the Lebesgue measure on $\mathbb C$.  
 
By definition of the Chern connection for the Bargmann-Fock metric \eqref{bfc}, we have, 
$ \nabla_{h_{BF}}' f(z)=\frac{\d f}{\d z}-\bar z f$, $ \nabla_{h_{BF}}' \nabla_{h_{BF}}'  f(z)=\frac{\d ^2 f}{\d z^2}-2\bar z\frac{\d f}{\d z}+\bar z^2 f$ and 
$ \nabla_{h_{BF}}''\nabla_{h_{BF}}'  f=-f$.
 For such Gaussian processes with covariance kernel $\e (f(u)f(v))=e^{u\cdot \bar v}$,  the covariance matrix is given by \cite{AT},

\begin{equation} \Delta=\begin{pmatrix}A& B\\ B^*& C
    \end{pmatrix}_{5\times 5},\end{equation}

   where $$A=\begin{pmatrix} e^{|u|^2}& (u-v)e^{u \bar v}\\
      (\bar u-\bar v)e^{v\bar u} &e^{|v|^2} \end{pmatrix},$$

    $$B=\begin{pmatrix} 0&(u-v)^2 e^{u\bar v} & -e^{u\bar v}  \\ 
     (1+\bar u v+\bar v u-|u|^2-|v|^2) e^{v \bar u}&0 &0
    \end{pmatrix}$$
   and 
   \begin{align*} &C=\\ 
  & \begin{pmatrix} e^{|u|^2} &  ( u- v)(\bar u v+\bar v u+2-|v|^2-|u|^2)e^{u\bar  v}  &(\bar u-\bar v)e^{u\bar v}\\ 
    (\bar u-\bar v)(\bar u v+\bar v u+2-|v|^2-|u|^2)e^{\bar u v}&2e^{|v|^2}&0\\
  ( u-  v)e^{\bar u v}&0 &e^{|v|^2}\end{pmatrix}.\end{align*}
  
Given the covariance matrix, by elementary matrix computations, the joint density   in Lemma \ref{joint1} can be further simplified as \cite{AT},

 \begin{equation}\label{denty}p_{u,v}(0,\xi) =\frac 1{\pi^{5} }\frac 1{ \det A \det \Lambda}\exp\left\{- \xi^* \Lambda^{-1} \xi \right\} ,
\end{equation}
where

\begin{equation}\label{matrixd}\Lambda=C- B^* A^{-1} B\end{equation}
 is a positive symmetric matrix. 
 
We have the following observations to simplify our computations. Since the Bergman kernel for the Bargmann-Fock space is invariant with respect to unitary transformations and equivariant with respect to
 translations, hence zeros of Gaussian analytic functions $f(z)$ are also invariant with respect to the group of isometric translations, i.e., unitary transformations and translations of $\mathbb C$ \cite{HKPV}.  By computing the covariance kernel of $\nabla'_{h_{BF}} f$, we can  prove that   critical points are also rotation and translation invariant. Hence,  the \tp of the Gaussian analytic function  is a function depending only on the distance $r:=|u-v|$. Without loss of generosity, we take 
$u=r$ and $v=0$, then, 

$$A=\begin{pmatrix} e^{r^2}&r\\
     r &1\end{pmatrix},$$

    $$B=\begin{pmatrix} 0&r^2 & -1 \\ 
     1-r^2&0 &0
    \end{pmatrix}$$
   and 
   \begin{align*} &C=
  \begin{pmatrix} e^{r^2} &  2r-r^3 &r\\ 
  2r-r^3&2&0\\
 r&0 &1\end{pmatrix}.\end{align*}
  \subsubsection{Short range behavior}
Let's first derive the short range behavior of the \tp of Gaussian analytic functions as $r\to 0$.  It's easy to get $$\lim_{r\to 0}\det A=1\,\,\,\mbox{and}\,\,\, \lim_{r\to 0}\Lambda=  \begin{pmatrix} 0 & 0 &0\\ 
  0&2&0\\
 0&0 &0\end{pmatrix}.$$
 

 Let $$P=\begin{pmatrix} 1&0 & 0 \\ 
     0 &0 &0 \\ 0&0&0
    \end{pmatrix} \,\,\,\mbox{and}  \,\,\,Q=\begin{pmatrix} 0&0 & 0 \\ 
     0 &1 &0 \\ 0&0&-1
    \end{pmatrix}.$$ If we combine \eqref{denty}, we can fruther rewrite the $(2,2)$-current \eqref{kbf} as, 
\begin{equation}\label{dkdkk}K_{BF}^{1}(u,v):=\tilde  K_{BF}^{1}(u,v)\frac {d\ell_u}\pi\wedge \frac {d\ell_v}\pi\end{equation}
 where we denote
\begin{equation}\label{dkd} \tilde K_{BF}^1(u,v)= \frac1{\pi^3 \det A }\int_{\mathbb C^3} \frac {e^{- \xi^* \Lambda^{-1} \xi}}{ \det \Lambda} (\xi^*P\xi) \left|\xi^*Q\xi \right|  dV_\xi.\end{equation}
 as the \tp function.

Now we change variable $\xi\to  \Lambda^{-\frac 12}\xi$ to get, 

$$ \tilde K_{BF}^{1}(u,v)= \frac1{\pi^3 \det A }\int_{\mathbb C^3}  e^{-  \| \xi\|^2}  (\xi^*  \Lambda^{\frac 12}P  \Lambda^{\frac 12}\xi) \left|\xi^*  \Lambda^{\frac 12}Q  \Lambda^{\frac 12}\xi \right|  dV_\xi .$$
We observe that $\Lambda^{\frac 12}P  \Lambda^{\frac 12}$ can be uniformly bounded for $r$  small enough, thus we can change the order of the limit   $r\to 0$ and the integration of $\xi$. It's easy to see $$\lim_{r\to 0}\Lambda^{\frac 12}P  \Lambda^{\frac 12}=0,$$ hence,  
 $$\lim_{u\to v}\tilde K_{BF}^{1}(u,v)=0.$$ This proves that there is a 'repulsion' between zeros and critical points of  Gaussian analytic functions. 

\subsubsection{Long range behavior}\label{longrange}
Now we study the long range behavior of the \tp of Gaussian analytic functions as  $r\to \infty$.



As $r\to \infty$, we can derive the following estimate, 
 
   \begin{align*} &\Lambda=
  \begin{pmatrix} e^{r^2}-(r^2-1)^2 &  2r-r^3 &r\\ 
  2r-r^3&2&0\\
 r&0 &1\end{pmatrix}+O(r^{-\infty}).\end{align*}
Hence, the square root of $\Lambda$ has the following estimate as $r\to \infty$,
  \begin{align*} &\Lambda^{\frac12}=
  \begin{pmatrix} e^{r^2/2} &  0&0\\ 
  0&\sqrt 2&0\\
  0&0&1  \end{pmatrix}+O(r^{-\infty}).\end{align*}

  Thus, up to $O(r^{-\infty})
$ which is negligible, we have, 
\begin {align*}\tilde K_{BF}^{1}(u,v)
= \frac{e^{r^2}}{\pi^3 \det A }\int_{\mathbb C^3}  e^{-  \| \xi\|^2}  |\xi_1|^2 \left|  2|\xi_2|^2- |\xi_1|^2 \right|  dV_\xi+O(r^{-\infty}).\end{align*}

 
 

Note $\det A=e^{r^2}-r^2$, thus as $r\to \infty$, 
 
 $$\frac {e^{r^2}}{\det A}=1+O(r^{-\infty}).$$
 Hence, 
 
 \begin{align*}\tilde K_{BF}^{1}(u,v)&= \frac1{\pi^3  }\int_{\mathbb C^3}  e^{-  \| \xi\|^2}  |\xi_1|^2 \left|  2|\xi_2|^2- |\xi_1|^2 \right|  dV_\xi  +O(r^{-\infty})\\
 &=\frac 5 3 +O(r^{-\infty}).\end{align*}
This verifies that there is no correlation between zeros and critical points for Gaussian analytic functions for the long range, roughly speaking,  zeros and critical points behave independently if they are far apart. Hence, we complete our main Theorem \ref{main} for Riemann surfaces.



\subsection{Higher dimensions}\label{higherdims}
Now let's sketch the proof of Theorem \ref{main} for higher dimensional \kahler manifolds.   Again, it's enough to consider the following Gaussian analytic functions on $\mathbb C^m$
$$f(z)=\sum_{\alpha\in \mathbb Z^m_+}a_{\alpha}\frac{z^\alpha}{\sqrt{\alpha!}}$$ where
$a_\alpha$ are standard complex Gaussian random variables so that the covariance kernel 
$$Cov(f(z), f(w))=e^{z\cdot \bar w}.$$

We apply Lemma \ref{joint1222} to $\mathbb C^m$ with the Bargmann-Fock metric, the \tp between zeros and critical points for $f(z)$ is, 
$$ K^m_{BF}(u,v):=\tilde K^m_{BF}(u,v)  \frac{d\ell_u}{\pi}\wedge \frac{(d\ell_v)^m }{\pi^m m!},$$
where we denote   $d\ell_z:= \frac i 2\sum_{j=1}^m dz_j\wedge d\bar{z_j}$ such that  $\frac{(d\ell_z)^m }{ m!}$ is the Lebesgue measure on $\mathbb C^m$ and we denote
\begin{align*}  \tilde K^m_{BF}(u,v)&= \pi^{m+1} \int_{\mathbb C^{m^2+2m}}p^n_{u,v}(0, 0, \xi, H_1,H_2)  \|\xi\|^2 \left|\det(H_1^*H_1 - H_2^*H_2) \right|\\ 
& \times  dV_\xi dV_{H_1}dV_{H_2}    
\end{align*} as the \tp function. Here, $p^n_{u,v}(x,y, \xi, H_1,H_2)$ is the joint density of Gaussian processes  $(f(u), \nabla_{h_{BF},i}' f(v),$
 $\nabla'_{h_{BF},i} f(u),\nabla_{h_{BF},i}' \nabla_{h_{BF},j}' f(v),$$\nabla_{h_{BF},i}''\nabla_{h_{BF},j}' f(v))$ with $1\leq i \leq  j\leq m$.  
 
 By   definition of the Chern connection \eqref{bfc}, we have  $\nabla_{h_{BF},i}'  f= \frac{\d  f}{\d z_i } -\bar z_i f$, $\nabla_{h_{BF},j}'\nabla_{h_{BF},i}'  f= \frac{\d^2 f}{\d z_i\d z_j}-\bar z_i\frac{\d f}{\d z_j}-\bar z_j\frac{\d f}{\d z_i}+\bar z_i\bar z_j f$ and 
$ \nabla_{h_{BF},j}''\nabla_{h_{BF},i}'  f=-\delta_i^j f$. Both zeros and critical points of Gaussian analytic functions $f(z)$ are  rotation and translation invariant,  hence,  \tp is again a function depending only on the distance $r:=|u-v|$. Without loss of generosity, we take 
$u=(r,0,\cdots, 0)$ and $v=(0,\cdots, 0)$. The Gaussian processes are further simplified to be $$\left(f(u),\frac {\d f(v)}{\d v_i} 
  , \frac{ \d f(u)}{\d u_i}-\delta_1^irf(u),    \frac{\d^2 f}{\d v_i\d v_j}  , -\delta_i^j f(v)  \right) $$
evaluated at the point $u=(r,0,\cdots, 0)$ and $v=(0,\cdots, 0)$ where $1\leq i\leq j \leq m$.

Note that the last element is $-\delta_i^j f(v)$, this implies that  the dimension of the above Gaussian processes can be reduced when $u=(r,0,\cdots, 0)$ and $v=(0,\cdots, 0)$. Hence, the \tp can be further simplified to be \begin{equation}\label{higherdim}
\begin{split}
  \tilde K^m_{BF}(u,v)&= \pi^{m+1} \int_{\mathbb C^{\frac{(m+1)(m+2)}{2}}} p^n_{u,v}(0,0,  \xi, H_1,\eta)  \|\xi\|^2 \left|\det(H_1^*H_1 -|\eta|^2 I) \right|\\&\times dV_\xi dV_{H_1}dV_{\eta}    \end{split}
 \end{equation}
evaluated at $u=(r,0,\cdots, 0)$ and $v=(0,\cdots, 0)$, where $dV_\eta$ is the Lebesgue measure on $\C$.
Here, $p^n_{u,v}(x,y, \xi, H_1,\eta)$ is the joint density of Gaussian processes   
 $$\left(f(u),\frac {\d f(v)}{\d v_i} 
  , \frac{ \d f(u)}{\d u_i}-\delta_1^irf(u),    \frac{\d^2 f}{\d v_i\d v_j}  ,  -f(v)  \right) $$
evaluated at the point $u=(r,0,\cdots, 0)$ and $v=(0,\cdots, 0)$.

To compute the covariance matrix, following identities \eqref{denty}\eqref{matrixd}, we have,
 \begin{align*} A&=\begin{pmatrix} \e f(u)\overline { f(u)} & \e f(u)\overline{\frac {\d f(v)}{\d v_i}}   \\ 
   \e \overline{f(u)} \frac {\d f(v)}{\d v_i}&  \e \frac {\d f(v)}{\d v_i}\overline{\frac {\d f(v)}{\d v_j}}   \end{pmatrix}_{|_{u=(r,0,\cdots, 0),v=(0,\cdots, 0)}}\\
   &=\begin{pmatrix}e^{r^2} & r& 0 &\cdots &0 \\ 
  r& 1&0&\cdots&0 \\
  0&0&1&\cdots&0\\
  \vdots&\vdots &&&\vdots\\
  0 &0&0 &\cdots& 1 \end{pmatrix}_{(m+1)\times (m+1)}.\end{align*}
Following the same computations, we have, 
\[ B=\left(\begin{array}{cccc|ccccc}
  0& 0& \cdots &0 & r^2 &0 &\cdots& 0& -1\\\hline
1-r^2& 0& \cdots &   \\
  0&1&\cdots&   \\
   \vdots&  \vdots & \vdots  &\vdots  \\
 0&0&0&1& \multicolumn{5}{c}
{\raisebox{3ex}[2pt]{\Huge0}}
    \end{array}\right)_{(m+1)\times \frac{(m+1)(m+2)}2}\]
 
and $C$ is a symmetric   $\frac{(m+1)(m+2)}2\times \frac{(m+1)(m+2)}2$ matrix  sketched as, 
 

\[ C=\left(\begin{array}{cccc|ccccc}
e^{r^2}& 0& \cdots &0 & 2r-r^3 &0 &\cdots& 0&r\\
 0&  e^{r^2}&0& \cdots &   &   r  & &\cdots &0 \\
 \vdots & & e^{r^2}&0& \cdots &  &r  &\cdots&0\\
  &   &   &\ddots  & &&& \vdots& \vdots \\ \hline
   2r-r^3& & & &2&&&0&0\\
  \vdots&  & & & &1&&0&0\\
    &    &   & &&&\ddots& \vdots& \vdots \\
        & & & & & &&2&0\\
   r    & \cdots& & &\cdots & && \cdots&1\\
   \end{array}\right)\]
The diagonal elements of $C$ is $\{\underbrace{e^{r^2},\cdots,e^{r^2}}_{\text{$m$}}, \underbrace{2, 1,\cdots,1}_{\text{$m$}},\underbrace{2, 1,\cdots,1}_{\text{$m-1$}}, \cdots, \underbrace{2,1}_{\text{$2$}}, 2,1\}$. For the off diagonal element, it's either $0$ or $2r-r^3$ or $r$.
In fact, we will see that the diagonal elements especially the first $m$ diagonal elements in $C$ are crucial in the  following computations. 

For the short range as $r\to 0$, the matrix $A$ tends to the identity matrix, $B$ tends to a matrix with $(0,\cdots, -1)$ as the first row and a $m\times m$ identity matrix  
in the lower-triangle and $0$ for the rest, and $C$ tends to a diagonal matrix $diag\{ \underbrace{1,1,\cdots, 1}_{\text{$m$}}, 2,1,\cdots, 2 \}$. Hence, as $r\to 0$, $\Lambda=C- B^* A^{-1} B$ tends to a diagonal matrix $diag\{ \underbrace{0,0,\cdots, 0}_{\text{$m$}}, 2,\cdots   \}$ where  the first $m$ elements are $0$. Hence, the Gaussian density  at least degenerates to $\delta_{\xi=0}$ as $r\to0$ for $\xi\in \mathbb C^m$, which implies that the integration \eqref{higherdim} must tend to $0$. 

For the long range as $r\to \infty$, following the same argument as in \S\ref{longrange}, we  change variables $\xi\to e^{r^2/2} \xi$,  then up to  a negligible term $O(r^{-\infty})$, the limit  as $r\to \infty$  is the constant 
 \begin{equation}\label{cm}\begin{split}c_m=  & \frac{ \pi^{m+1}}{\det\tilde\Lambda} \int_{\mathbb C^{\frac{(m+1)(m+2)}{2}}} exp\left\{-(\xi, H_1,\eta)\tilde \Lambda^{-1}\begin{pmatrix}\xi\\ H_1\\ \eta\end{pmatrix} \right\} \|\xi\|^2\\ &\times  \left|\det(H_1^*H_1 -|\eta|^2 I) \right|dV_\xi dV_{H_1}dV_{\eta}  \end{split}  
 \end{equation}where $\tilde\Lambda$ is the diagonal matrix $diag\{\underbrace{1,\cdots,1}_{\text{$m$}}, \underbrace{2, 1,\cdots,1}_{\text{$m$}},\underbrace{2, 1,\cdots,1}_{\text{$m-1$}}, \cdots, \underbrace{2,1}_{\text{$2$}}, 2,1\}$.

\end{document}